\documentclass[10pt]{article}
\usepackage[noadjust]{cite}

\usepackage{tikz}
\usepackage{amsmath,amsthm,amsfonts,amssymb,latexsym,graphicx}
\usepackage{etoolbox}
\usepackage{algorithm}
\usepackage[noend]{algpseudocode}
\usepackage[pdfpagemode=UseNone,pdfstartview=FitH]{hyperref}

\allowdisplaybreaks[4]

\newtheorem{theorem}{Theorem}
\newtheorem{lemma}[theorem]{Lemma}
\newtheorem{corollary}[theorem]{Corollary}

\theoremstyle{definition}

\newtheorem{remark}{Remark}

\newcommand{\E}{\mathbb{E}}
\renewcommand{\P}{\mathbb{P}}

\newcommand*{\dd}{\,\mathrm{d}}

\title{Conformal e-prediction in the presence of confounding}
\author{Vladimir Vovk and Ruodu Wang}

\begin{document}
\maketitle
\begin{abstract}
  This note extends conformal e-prediction to cover the case
  where there is observed confounding between the random object $X$
  and its label $Y$.
  We consider both the case where the observed data is IID
  and a case where some dependence between observations is permitted.

   The version of this note at \url{http://alrw.net} (Working Paper 46)
   is updated most often.
\end{abstract}

\section{Introduction}

Conformal prediction in its basic form is only applicable to IID sequences of observations.
In causal inference,
including Pearl's \cite[Chap.~3]{Pearl:2009} do calculus,
we typically observe IID data but then would like to predict
results of interventions into stable stochastic mechanisms generating the data.
In this note we apply conformal e-prediction to this prediction problem
in order to obtain finite-sample guarantees of validity.
Formally, however, the specific approach that we take in this note
goes beyond conformal prediction and is closer to ``randomness prediction''
(as defined in, e.g., \cite{Vovk:2026ALT}).

\begin{figure}
  \begin{center}
    \begin{tikzpicture}
      \node (v0) at (0,0) {X};
      \node (v1) at (4,0) {Y};
      \node (v2) at (2,1) {Z};
      \draw [->] (v2) edge (v0);
      \draw [->] (v0) edge (v1);
      \draw [->] (v2) edge (v1);
    \end{tikzpicture}
  \end{center}
  \caption{The main causal graph of this note}
  \label{fig:XYZ}
\end{figure}
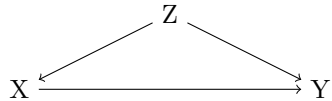

The main question asked in this note is about the simplest setting of causal inference.
We are interested in the causal effect of a random variable $X$
on a random variable $Y$ with a confounder $Z$.
(See Figure~\ref{fig:XYZ}.)
Namely, after setting $X:=x$, we would like to say something about $Y$,
e.g., to output a prediction region for it.
The available data comes from an observational study.
We consider two settings:
in Sect.~\ref{sec:IID} we start from the standard IID one,
while in Sect.~\ref{sec:game} we allow $X$ to be chosen by a non-trivial strategy,
as in \cite{Vovk:1996Gam} and \cite[Sect.~3.6.1, third extension]{Pearl:2009}.
Our mathematical results in Sect.~\ref{sec:IID} are simple and likely to be known.

For a positive integer $N$, we set $[N]:=\{1,\dots,N\}$.

\section{The IID setting}
\label{sec:IID}

Let $P$ be a positive probability measure on $\mathbf{X}\times\mathbf{Y}\times\mathbf{Z}$
generating random variables $(X,Y,Z)$;
for simplicity we assume that $\mathbf{X}\times\mathbf{Y}\times\mathbf{Z}$ is finite
(and equipped with the discrete $\sigma$-algebra).
Let us fix $x\in\mathbf{X}$; we will set $X$ to $x$, $X:=x$.
For a given value $y\in\mathbf{Y}$, define
\begin{equation}\label{eq:p}
  p_y
  :=
  \sum_{z\in\mathbf{Z}}
  P(Z=z) P(Y=y\mid X=x,Z=z).
\end{equation}
The interpretation of \eqref{eq:p}
is that it is the probability of $Y=y$
in the mutilated causal model
in which the arrow from $Z$ to $X$ in Figure~\ref{fig:XYZ} has been removed
and $X$ has been set to $x$.
We do not mention $x$, which is fixed, in our notation.

Generate an IID random sample $(X_n,Y_n,Z_n)$, $n\in[N]$,
of size $N\ge1$ from $P$.
For each $y\in\mathbf{Y}$ define the estimate $F_y$ of $p_y$ by
\begin{equation}\label{eq:F}
  F_y
  :=
  \sum_{z\in\mathbf{Z}}
  \frac{\left|\{n\in[N]:Z_n=z\}\right|+1}{N+1}
  \times
  \frac{\left|\{n\in[N]:(X_n,Y_n,Z_n)=(x,y,z)\}\right|+1}{\left|\{n\in[N]:(X_n,Z_n)=(x,z)\}\right|+1}.
\end{equation}

\begin{lemma}\label{lem:main}
  For each $y\in\mathbf{Y}$, it is true that
  \begin{equation}\label{eq:lem-E}
    \E
    \frac{p_y}{F_y}
    \le
    1.
  \end{equation}
\end{lemma}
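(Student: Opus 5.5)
The plan is to reduce the ratio of two sums to a convex combination of ratios of single terms, and then to bound each single-term ratio by a standard binomial identity. Fix $y$ and write
\[
  a_z:=P(Z=z),\quad b_z:=P(Y=y\mid X=x,Z=z),\quad
  A_z:=\frac{K_z+1}{N+1},\quad B_z:=\frac{M_z+1}{L_z+1},
\]
where $K_z:=|\{n:Z_n=z\}|$, $L_z:=|\{n:(X_n,Z_n)=(x,z)\}|$ and $M_z:=|\{n:(X_n,Y_n,Z_n)=(x,y,z)\}|$. In this notation $p_y=\sum_z a_zb_z$ and $F_y=\sum_z A_zB_z$. By positivity of $P$, every $a_z,b_z>0$ and $p_y>0$.

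\textbf{Step 1 (Jensen with deterministic weights).} Set $\lambda_z:=a_zb_z/p_y$. These weights are deterministic, positive and sum to $1$. Since $t\mapsto1/t$ is convex on $(0,\infty)$,
\[
  \frac{1}{F_y}
  =\frac{1}{\sum_z\lambda_z\,(A_zB_z/\lambda_z)}
  \le\sum_z\lambda_z\frac{\lambda_z}{A_zB_z}.
\]
Multiplying by $p_y$ gives
\[
  \frac{p_y}{F_y}\le\sum_z\lambda_z\frac{a_zb_z}{A_zB_z}.
\]
Because $\sum_z\lambda_z=1$, it therefore suffices to show that $\E[a_zb_z/(A_zB_z)]\le1$ for each fixed $z$.

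\textbf{Step 2 (conditioning).} Fix $z$ and condition on the $\sigma$-algebra $\mathcal G$ generated by all $(X_n,Z_n)$, $n\in[N]$. Then $A_z$ and $L_z$ are $\mathcal G$-measurable. Given $\mathcal G$, the labels $Y_n$ with $(X_n,Z_n)=(x,z)$ are independent by the IID assumption, and each equals $y$ with probability $b_z$. Hence, conditionally on $\mathcal G$, $M_z\sim\mathrm{Bin}(L_z,b_z)$.

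\textbf{Step 3 (binomial identity).} For $M\sim\mathrm{Bin}(L,b)$ the standard computation gives
\[
  \E\frac{1}{M+1}=\frac{1-(1-b)^{L+1}}{(L+1)b}.
\]
Consequently
\[
  \E\!\left[\frac{b_z}{B_z}\,\Big|\,\mathcal G\right]=1-(1-b_z)^{L_z+1}\le1,
\]
and this holds also when $L_z=0$. Since $a_z/A_z$ is $\mathcal G$-measurable,
\[
  \E\frac{a_zb_z}{A_zB_z}\le\E\frac{a_z}{A_z}.
\]
Now $K_z\sim\mathrm{Bin}(N,a_z)$, so the same identity gives
\[
  \E\frac{a_z(N+1)}{K_z+1}=1-(1-a_z)^{N+1}\le1.
\]
Combining Steps 1--3 yields \eqref{eq:lem-E}.

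The main obstacle is the first step. The estimate $F_y$ is a sum of products of dependent random quantities, so $\E[1/F_y]$ cannot be handled directly. The key is to choose the Jensen weights proportional to the true terms $a_zb_z$, which is what makes each resulting term collapse to the ratio $a_zb_z/(A_zB_z)$. After that, the dependence between $A_z$ and $B_z$ is removed by conditioning on the $(X_n,Z_n)$, which is legitimate because the $Y$-labels are conditionally independent given them.
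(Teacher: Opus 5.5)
Your proof is correct and takes essentially the paper's route: your Jensen step with weights $a_zb_z/p_y$ is the paper's weighted harmonic-versus-arithmetic mean reduction, and your conditioning on the $(X_n,Z_n)$ plays the role of the paper's combination of \eqref{eq:basic} and \eqref{eq:basic-1}. The only difference is that you prove those two conformal-validity bounds directly from the identity $\E[1/(M+1)]=(1-(1-b)^{L+1})/((L+1)b)$ rather than citing conformal e-prediction, and you combine them by the tower property, which makes precise the paper's ``multiplying'' step.
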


We will prove Lemma~\ref{lem:main} in Appendix~\ref{app:proofs}.

Let $Y_{N+1}$ be a random variable,
independent of the sample $(X_n,Y_n,Z_n)$, $n\in[N]$,
with values in $\mathbf{Y}$
taking each value $y\in\mathbf{Y}$ with probability $p_y$ given by \eqref{eq:p}.
In causal inference we will use the following corollary
(see \cite{Ramdas/Wang:2025} for hypothesis testing using e-variables).

\begin{corollary}\label{cor:main}
  For each probability measure $Q$ on $\mathbf{Y}$,
  the random variable $E$ defined by
  \begin{equation}\label{eq:cor-E}
    E
    :=
    \frac{Q(\{Y_{N+1}\})}{F_{Y_{N+1}}}
  \end{equation}
  is an e-variable (i.e., is nonnegative with expectation at most 1).
\end{corollary}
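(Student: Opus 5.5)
The plan is to reduce Corollary~\ref{cor:main} directly to Lemma~\ref{lem:main}, using the independence of $Y_{N+1}$ from the sample. Nonnegativity is immediate. By \eqref{eq:F}, every $F_y$ is a finite sum of strictly positive terms: each summand has numerator at least $1$ in both factors, and the sum over $z$ is nonempty because $P$ is positive. So $F_{Y_{N+1}}>0$, and $E$ in \eqref{eq:cor-E} is well defined, finite and nonnegative.

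For the expectation, I will condition on $Y_{N+1}$. Equivalently, I use Fubini over the finite set $\mathbf{Y}$ together with independence:
\begin{equation*}
  \E E
  =
  \sum_{y\in\mathbf{Y}} p_y\,\E\frac{Q(\{y\})}{F_y}
  =
  \sum_{y\in\mathbf{Y}} Q(\{y\})\,\E\frac{p_y}{F_y}.
\end{equation*}
The first equality holds because $Y_{N+1}$ is independent of $(X_n,Y_n,Z_n)_{n\in[N]}$, and each $F_y$ is a function of that sample only. The second equality just moves the deterministic constants $p_y$ and $Q(\{y\})$ around.

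Applying \eqref{eq:lem-E} of Lemma~\ref{lem:main} to each $y$ then gives $\E E\le\sum_{y}Q(\{y\})=1$.

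There is no real obstacle here; the substantive work sits in Lemma~\ref{lem:main}. The only point needing care is the factor $p_y$ in the first equality: it comes from the law of $Y_{N+1}$, and it is exactly what turns $\E(Q(\{y\})/F_y)$ into the quantity controlled by the lemma. If $p_y=0$ for some $y$, that term vanishes on both sides, so nothing breaks (under positivity of $P$ this cannot happen anyway).
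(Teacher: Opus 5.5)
Your proposal is correct and follows the paper's proof exactly. You condition on $Y_{N+1}$ using its independence from the sample, swap the constants $p_y$ and $Q(\{y\})$, and apply Lemma~\ref{lem:main} termwise, which is valid because $Q(\{y\})\ge0$; your explicit check that $F_y>0$ is a harmless detail the paper leaves implicit.
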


\begin{proof}
  The statement of the corollary follows from
  \[
    \E
    \frac{Q(\{Y_{N+1}\})}{F_{Y_{N+1}}}
    =
    \sum_{y\in\mathbf{Y}}
    p_y
    \E
    \frac{Q(\{y\})}{F_y}
    =
    \sum_{y\in\mathbf{Y}}
    Q(\{y\})
    \E
    \frac{p_y}{F_y}
    \le
    \sum_{y\in\mathbf{Y}}
    Q(\{y\})
    =
    1,
  \]
  where the first equality follows from the independence of $Y_{N+1}$.
\end{proof}

Two particularly natural choices for $Q$ in Corollary~\ref{cor:main}
are the uniform probability measure on $\mathbf{Y}$
and the probability measure concentrated on some $y^*\in\mathbf{Y}$.
The former choice treats all labels $y\in\mathbf{Y}$ symmetrically.
The latter choice is appropriate when a specific label $y^*$
is much more important for us than the other labels
and we would like to exclude it confidently when possible;
an example of such a label is  ``patient's death''.

Let us apply Corollary~\ref{cor:main} to a simple problem of causal inference.
After generating the IID random sample $(X_n,Y_n,Z_n)$, $n\in[N]$,
we generate $Y_{N+1}$ according to the probability measure
assigning probability $p_y$ to each $y\in\mathbf{Y}$
(conditionally on the random sample, of course).
This is exactly the probability distribution on $Y$ in the mutilated causal network
obtained by setting $X:=x$.

Corollary~\ref{cor:main} allows us to output e-prediction regions for $Y_{n+1}$.
Given an alternative $Q$ (say, the uniform probability measure on $\mathbf{Y}$)
and a significance level $\alpha>0$ (typically a large number, such as 10 or 100),
we define the corresponding e-prediction region by
\begin{equation}\label{eq:region}
  \Gamma^{\alpha}
  :=
  \left\{
    y\in\mathbf{Y}:
    \frac{Q(\{y\})}{F_y}<\alpha
  \right\}.
\end{equation}
The prediction regions $\Gamma^{\alpha}$ are nested and grow as $\alpha$ increases.
The main property of validity for this predictor is
\begin{equation}\label{eq:validity}
  \int_0^{\infty}
  \P
  \left(
    Y \notin \Gamma^{\alpha}
  \right)
  \dd\alpha
  \le
  1;
\end{equation}
in words, the probability of error at significance level $\alpha>0$
should integrate to at most 1
\cite[the end of Appendix B]{Vovk:2025PR}.
Since the probability of error decreases in $\alpha$,
the probability of error at level $\alpha$ does not exceed $1/\alpha$
by Markov's inequality.
This is a simpler property of validity;
however, the property of validity expressed by \eqref{eq:validity}
is much stronger
(and in fact is equivalent to the $E$ defined by \eqref{eq:cor-E}
being an e-variable).

For a large $N$ and small $\left|\mathbf{Z}\right|$,
the e-prediction regions \eqref{eq:region} are close to being optimal in some sense.
Namely, the ``oracle'' e-prediction regions are
\begin{equation}\label{eq:ideal}
  \Gamma^{\alpha}
  :=
  \left\{
    y\in\mathbf{Y}:
    \frac{Q(\{y\})}{p_y}<\alpha
  \right\}
\end{equation}
if $Q$ is a genuine alternative \cite[Chap.~3]{Ramdas/Wang:2025}.
Since $F_y$ as defined by \eqref{eq:F} is an estimate of $p_y$,
\eqref{eq:region} is an approximation to \eqref{eq:ideal}
that can be computed from the data.

One special case mentioned earlier is where $Q$ is concentrated
on some outcome $y^*$, which we would like to avoid (``death of the patient'').
For a large $\alpha$, we are justified in predicting $Y_n\ne y^*$
when we observe $F_{y^*}\le1/\alpha$.
In particular, the probability that we will make an error
does not exceed $1/\alpha$ (where the probability is marginal,
not conditional on $F_{y^*}\le1/\alpha$).

\begin{remark}\label{rem:back-door}
  For simplicity, in this note
  we concentrate on the simplest causal graph given in Figure~\ref{fig:XYZ}.
  However, it is easy to extend our approach
  to any causal graph covered by the popular back-door criterion:
  see \cite[Theorem 3.3.2]{Pearl:2009}.
  Now $Z$ becomes a set of variables in general,
  called an adjustment set.
\end{remark}

\section{No stable stochastic mechanism for $X$}
\label{sec:game}

As pointed out in \cite[Sect.~1]{Vovk:1996Gam} (see also \cite[Sect.~3.6.1]{Pearl:2009}),
the assumption that $X_n$, $n\in[N]$, are output by a stable stochastic mechanism
is unnatural in the context of causal inference,
since we are contemplating setting $X$ to some value $x$.
In this section we will still assume that $Z_n$ and $Y_n$ are generated
by stable stochastic mechanisms but will drop this assumption for $X_n$.

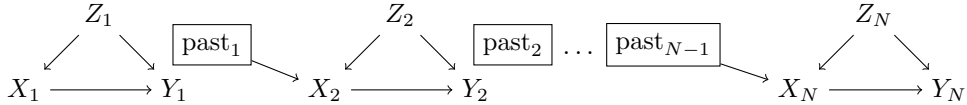
\begin{figure}
  \centerline{%
    \begin{tikzpicture}
      \node (X1) at (0,0) {$X_1$};
      \node (Y1) at (2,0) {$Y_1$};
      \node (Z1) at (1,1) {$Z_1$};
      \draw [->] (Z1) edge (X1);
      \draw [->] (X1) edge (Y1);
      \draw [->] (Z1) edge (Y1);
      \node (X2) at (4,0) {$X_2$};
      \node (Y2) at (6,0) {$Y_2$};
      \node (Z2) at (5,1) {$Z_2$};
      \draw [->] (Z2) edge (X2);
      \draw [->] (X2) edge (Y2);
      \draw [->] (Z2) edge (Y2);
      \node (XN) at (10.3,0) {$X_N$};
      \node (YN) at (12.3,0) {$Y_N$};
      \node (ZN) at (11.3,1) {$Z_N$};
      \draw [->] (ZN) edge (XN);
      \draw [->] (XN) edge (YN);
      \draw [->] (ZN) edge (YN);
      \node[draw, rectangle] (past1) at (2.5,0.6) {$\text{past}_1$};
      \draw [->] (past1) edge (X2);
      \node[draw, rectangle] (past2) at (6.5,0.6) {$\text{past}_2$};
      \node at (7.4,0.5) {$\dots$}; 
      \node[draw, rectangle] (past3) at (8.5,0.6) {$\text{past}_{N-1}$};
      \draw [->] (past3) edge (XN);
    \end{tikzpicture}}
  \caption{The repeated causal graph}
  \label{fig:XYZ-repeated}
\end{figure}

Our assumptions are represented graphically in Figure~\ref{fig:XYZ-repeated},
which essentially consists of a series of causal triangles
analogous to the one in Figure~\ref{fig:XYZ}.
The triangles are arranged chronologically from left to right
(while the order inside the triangles is not chronological, of course;
within each triangle the arrows represent the causal order).
The boxes labelled $\text{past}_n$, $n\in[N-1]$, stand for sets of variables.
In this and next sections
we will consider two possible interpretations of these boxes;
in any case, $\text{past}_n$ represents some variables in the first $n$ triangles.

The most satisfying analogue of Lemma~\ref{lem:main}
obtains when each box $\text{past}_n$ stands
for all the variables $X_i$ and $Z_i$, $i\in[n]$.
We will call this the \emph{$Y$-oblivious interpretation}
of Figure \ref{fig:XYZ-repeated}.
Under this interpretation, each $X_{n+1}$, $n\in[N-1]$,
has incoming arrows from all $X_i$ and $Z_i$, $i\in[n]$.

\begin{lemma}\label{lem:oblivious}
  Lemma~\ref{lem:main} continues to hold under the $Y$-oblivious interpretation
  of Figure \ref{fig:XYZ-repeated}.
\end{lemma}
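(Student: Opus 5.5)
The plan is to reduce the claim to two one-dimensional facts, one about the $Z$-counts and one about the $Y$-counts, by first getting rid of the sum over $z$ in the denominator with a deterministic convexity bound. Fix $y$ and write
\[
a_z:=P(Z=z),\quad q_z:=P(Y=y\mid X=x,Z=z),\quad A_z:=\frac{N_z+1}{N+1},\quad B_z:=\frac{K_z+1}{M_z+1},
\]
where $N_z$ counts $n$ with $Z_n=z$, $M_z$ counts $n$ with $(X_n,Z_n)=(x,z)$, and $K_z$ counts $n$ with $(X_n,Y_n,Z_n)=(x,y,z)$. Then $p_y=\sum_z a_zq_z$ and $F_y=\sum_z A_zB_z$.

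\textbf{Step 1: convexity bound.} Take the deterministic weights $w_z:=a_zq_z/p_y$, which sum to $1$; terms with $q_z=0$ contribute nothing and can be dropped. Apply convexity of $t\mapsto1/t$ in the form
\[
\frac{1}{\sum_z w_z t_z}\le\sum_z \frac{w_z}{t_z},
\qquad t_z:=\frac{A_zB_z}{w_zp_y}.
\]
This gives the pointwise inequality
\[
\frac{p_y}{F_y}\le\sum_z w_z\,\frac{a_z}{A_z}\cdot\frac{q_z}{B_z}.
\]
It therefore suffices to prove $\E\bigl[(a_z/A_z)(q_z/B_z)\bigr]\le1$ for each $z$. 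This step uses nothing about the data-generating mechanism, so presumably it is also the backbone of the proof of Lemma~\ref{lem:main}.

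\textbf{Step 2: the $Y$-factor, given all $X$'s and $Z$'s.} Under the $Y$-oblivious interpretation the joint probability of the data factorizes as
\[
\prod_n P(Z_n)\,\pi_n(X_n\mid X_{<n},Z_{\le n})\,P(Y_n\mid X_n,Z_n),
\]
where $\pi_n$ is the (arbitrary, possibly randomized) strategy choosing $X_n$. No $Y_i$ enters any $\pi_n$, so conditionally on the whole sequence $(X_n,Z_n)_{n\in[N]}$ the $Y_n$ are independent with $Y_n\sim P(\cdot\mid X_n,Z_n)$. Hence, given the $X$'s and $Z$'s, $K_z\sim\mathrm{Bin}(M_z,q_z)$ with $M_z$ fixed. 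The standard identity then gives
\[
\E\Bigl[\frac{q_z(M_z+1)}{K_z+1}\Bigm| X,Z\Bigr]=1-(1-q_z)^{M_z+1}\le1.
\]

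\textbf{Step 3: the $Z$-factor.} The factor $a_z/A_z$ is a function of the $Z$'s only, so by Step~2
\[
\E\Bigl[\frac{a_z}{A_z}\cdot\frac{q_z}{B_z}\Bigr]\le\E\frac{a_z}{A_z}.
\]
Since $Z$ has no incoming arrows, the $Z_n$ are IID with law $P_Z$ whatever the strategy for the $X$'s is. Thus $N_z\sim\mathrm{Bin}(N,a_z)$, and the same identity gives $\E[a_z(N+1)/(N_z+1)]=1-(1-a_z)^{N+1}\le1$. Combining Steps~1--3 yields $\E[p_y/F_y]\le\sum_z w_z=1$.

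The main obstacle is Step~2: one has to justify rigorously that conditioning on all $X_n,Z_n$, including the $X$'s chosen adaptively after earlier rounds, leaves the $Y_n$ independent with the stated conditional laws. I would do this by writing the joint mass function as above and summing out the $Y$'s. It is exactly here that $Y$-obliviousness is needed: if $X_{n+1}$ could depend on $Y_1,\dots,Y_n$, then $M_z$ would be a stopping-type quantity rather than a fixed number given the $X$'s and $Z$'s, and the binomial calculation would break.
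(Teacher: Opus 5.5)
Your proof is correct and is essentially the paper's argument. You reduce to a single $z$ via the weighted harmonic--arithmetic mean (convexity) bound, then multiply the unconditional bound \eqref{eq:basic} for the $Z$-counts by the bound \eqref{eq:basic-1}, which holds conditionally on all $(X_n,Z_n)$ precisely because the adaptively chosen $X_n$ never see the $Y$'s. The only difference is that you prove both one-dimensional bounds directly via the binomial identity $\E[q(m+1)/(K+1)]=1-(1-q)^{m+1}$ and justify the conditional independence by factorizing the joint mass function, whereas the paper cites the validity of (object-conditional) conformal e-prediction and the remark at the end of Sect.~\ref{subsec:conditional}.
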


The proof of Lemma~\ref{lem:oblivious} is given in Appendix~\ref{app:proofs}.
Corollary~\ref{cor:main} also holds in the $Y$-oblivious setting,
and therefore, we still have e-prediction regions~\eqref{eq:region}
satisfying the property of validity~\eqref{eq:validity}.

\section{Conclusion}
\label{sec:conclusion}

This note extends conformal e-prediction to a simple setting of causal inference.
(And in Appendix~\ref{app:proofs} we will see that our causal e-predictor
is a combination of $\left|\mathbf{Z}\right|$ conformal e-predictors.)
In this section we discuss some open questions and directions of further research.

In this note we only discussed causal inference
in the simplest situation of Figure~\ref{fig:XYZ},
but in Remark~\ref{rem:back-door}
we noticed that the extension to the back-door criterion
is straightforward.
There is no doubt that our method can be extended
to other cases in which Pearl's do calculus \cite{Pearl:2009}
and its variations allow us to identify the causal effect of $X$ on $Y$.
In the case of the back-door criterion,
our method works best for an adjustment set $Z$
that has the smallest product of the sizes of its elements' domains
(assuming $Z$ is not unique).

Our proof of Lemma~\ref{lem:oblivious} does not work
for the \emph{strong interpretation} of Figure~\ref{fig:XYZ-repeated},
where each $\text{past}_n$ stands for all the previous variables,
namely $(X_i,Y_i,Z_i)$, $i\in[n]$.
One possible approach in this situation is
to use conformal test martingales
\cite[Chaps.~8 and~9]{Vovk/etal:2022book}.

We assumed that each variable takes only finitely many values,
but another natural setting is regression,
in which $Y$ is allowed to be any real number
and we are interested in prediction intervals for it.

We have not discussed the optimality of our finite-sample results.
Our simulation studies suggest that \eqref{eq:lem-E} will hold
even if we use less heavy regularization in \eqref{eq:F}
(e.g., replacing the entries of ``${}+1$'' by ``${}+c$'' for $c<1$).
What are the admissible constants in \eqref{eq:lem-E}?
Improving them will lead to an automatic improvement
in the e-prediction regions \eqref{eq:region}.
A slack in \eqref{eq:lem-E} is also discussed in Sect.~\ref{subsec:slack}.

\appendix
\section{Connections with conformal prediction}

This note is motivated by conformal e-prediction,
which is applicable
to the case where we have only one random variable $Y$
and to the case where we have two random variables $X,Y$.
Both cases will be used in the proofs of Lemmas~\ref{lem:main} and~\ref{lem:oblivious}.

\subsection{Simple conformal e-prediction}

First suppose that we have only one random variable $Y$.
Let $P$ be a positive probability measure on $\mathbf{Y}$
generating $Y$.
Generate an IID random sample $Y_n$, $n\in[N+1]$, from $P$
of size $N+1$.
For a fixed $y\in\mathbf{Y}$, consider the nonconformity scores
\[
  \alpha_i
  :=
  \frac{N+1}{\left|\{n\in[N+1]:Y_n=y\}\right|}
  1_{\{Y_i=y\}},
  \quad
  i\in[N+1]
\]
\cite[Sect.~2]{Vovk:2025PR}.
The standard property of validity for conformal e-prediction gives
\begin{equation*}
  \E
  \left(
    \frac{N+1}{\left|\{n\in[N+1]:Y_n=y\}\right|}
    1_{\{Y_{N+1}=y\}}
  \right)
  \le
  1,
\end{equation*}
i.e.,
\begin{equation}\label{eq:Y}
  \E
  \frac{N+1}{\left|\{n\in[N]:Y_n=y\}\right|+1}
  \le
  \frac{1}{P(Y=y)}.
\end{equation}

It is instructive to see what Corollary~\ref{cor:main}
becomes in the case of simple conformal e-prediction.
We can rewrite \eqref{eq:Y} as
\begin{equation*}
  \E
  \frac{P(Y=y)}{\hat P(Y=y)}
  \le
  1,
\end{equation*}
where
\[
  \hat P(Y=y)
  :=
  \frac{\left|\{n\in[N]:Y_n=y\}\right|+1}{N+1}
\]
(so that $\hat P$ is an estimate of $P$,
although it is a ``super-probability measure''
rather than a probability measure
in that $\hat P(Y=y)$ sum to more than 1 over $y\in\mathbf{Y}$).
The same argument as in the proof of Corollary~\ref{cor:main}
shows that,
for any probability measure $Q$ on $\mathbf{Y}$,
\begin{equation*}
  \E
  \frac{Q(\{Y_{N+1}\})}{\hat P(\{Y_{N+1}\})}
  \le
  1.
\end{equation*}
In particular, the conformal e-prediction set
at significance level $\alpha>0$ (typically $\alpha\gg1$) is
\[
  \left\{
    y\in\mathbf{Y}:
    Q(\{y\})/\hat P(\{y\}) < \alpha
  \right\}.
\]

\subsection{Conditional conformal e-prediction}
\label{subsec:conditional}

Now suppose that we have a positive probability measure
$P$ on $\mathbf{X}\times\mathbf{Y}$
generating random variables $X$ and $Y$.
We generate an IID random sample $(X_n,Y_n)$, $n\in[N+1]$, from it.
For a given $y\in\mathbf{Y}$,
we use the nonconformity scores
\[
  \alpha_i
  :=
  \frac
    {\left|\{n\in[N+1]:X_n=X_i\}\right|}
    {\left|\{n\in[N+1]:(X_n,Y_n)=(X_i,y)\}\right|}
  1_{\{Y_i=y\}},
  \quad
  i\in[N+1],
\]
in the object-conditional conformal e-predictor
\cite[Sect.~4]{Vovk:2025PR} ($0/0$ is interpreted as 1 here).
Now we have, for a fixed $x\in\mathbf{X}$,
\begin{multline*}
  \E
  \biggl(
    \frac
      {\left|\{n\in[N+1]:X_n=x\}\right|}
      {\left|\{n\in[N+1]:(X_n,Y_n)=(x,y)\}\right|}
    1_{\{(X_{N+1},Y_{N+1})=(x,y)\}}\\
  \biggm|
    X_1,\dots,X_{N+1}
  \biggr)
  \le
  1,
\end{multline*}
i.e.,
\begin{equation}\label{eq:XY}
  \E
  \left(
    \frac
      {\left|\{n\in[N]:X_n=x\}\right|+1}
      {\left|\{n\in[N]:(X_n,Y_n)=(x,y)\}\right|+1}
  \middle|
    X_1,\dots,X_{N}
  \right)
  \le
  \frac{1}{P(Y=y\mid X=x)}.
\end{equation}

It is clear that for the validity of conditional conformal e-prediction
it suffices to assume that the $Y_n$ are IID conditional on $X_n$;
the distribution of $X_n$ can be arbitrary
\cite[Sect.~4.6.1]{Vovk/etal:2022book}.

\subsection{Slack in (\ref{eq:lem-E})}
\label{subsec:slack}

This section continues discussion started in Sect.~\ref{sec:conclusion}.
The presence of a slack in \eqref{eq:lem-E}
becomes transparent if we assume that $P(X=x)=1$.
Then \eqref{eq:F} can be rewritten as
\begin{align*}
  F
  &=
  \sum_{z\in\mathbf{Z}}
  \frac{\left|\{n\in[N]:Z_n=z\}\right|+1}{N+1}
  \times
  \frac{\left|\{n\in[N]:(Y_n,Z_n)=(y,z)\}\right|+1}{\left|\{n\in[N]:Z_n=z\}\right|+1} \\
  &=
  \sum_{z\in\mathbf{Z}}
  \frac{\left|\{n\in[N]:(Y_n,Z_n)=(y,z)\}\right|+1}{N+1}
  = \frac{\left|\{n\in[N]:Y_n=y\}\right|+\left|\mathbf{Z}\right|}{N+1},
\end{align*}
and so \eqref{eq:lem-E} is weaker than what can be obtained
with conformal e-prediction,
which allows us to have $1$ in place of the $\left|\mathbf{Z}\right|$.
Namely, when $P(X=x)=1$, \eqref{eq:lem-E} becomes
\begin{equation*}
  \E
  \left(
    P(Y=y)
    \middle/
    \frac{\left|\{n\in[N]:Y_n=y\}\right|+\left|\mathbf{Z}\right|}{N+1}
  \right)
  \le
  1
\end{equation*}
and so is weaker than \eqref{eq:Y}.

\section{Some proofs}
\label{app:proofs}

\begin{proof}[Proof of Lemma~\ref{lem:main}]
  We start from noticing that $p_y/F_y$
  is the weighted harmonic mean over $z\in\mathbf{Z}$ of $p_{y,z}/F_{y,z}$
  taken with the weights $p_{y,z}/p_y$,
  where
  \begin{equation*}
    p_{y,z}
    :=
    P(Z=z) P(Y=y\mid Z=z,X=x)
  \end{equation*}
  and
  \begin{equation*}
    F_{y,z}
    :=
    \frac{\left|\{n\in[N]:Z_n=z\}\right|+1}{N+1}
    \times
    \frac{\left|\{n\in[N]:(X_n,Y_n,Z_n)=(x,y,z)\}\right|+1}{\left|\{n\in[N]:(X_n,Z_n)=(x,z)\}\right|+1}.
  \end{equation*}
  This follows from the equalities $F_y=\sum_z F_{y,z}$ and $p_y=\sum_z p_{y,z}$,
  which imply
  \[
    \frac{p_y}{F_y}
    =
    \frac{1}{\sum_{z\in\mathbf{Z}}F_{y,z}/p_y}
    =
    \frac{1}{\sum_{z\in\mathbf{Z}}\frac{p_{y,z}}{p_y}\frac{F_{y,z}}{p_{y,z}}}.
  \]
  Since a weighted harmonic mean does not exceed
  the corresponding weighted arithmetic mean,
  it suffices to prove $\E(p_{y,z}/F_{y,z})\le1$
  for a fixed $z\in\mathbf{Z}$,
  i.e.,
  \begin{multline}\label{eq:Z1}
    \E
    \left(
      \frac{N+1}{\left|\{n\in[N]:Z_n=z\}\right|+1}
      \times
      \frac{\left|\{n\in[N]:(X_n,Z_n)=(x,z)\}\right|+1}{\left|\{n\in[N]:(X_n,Y_n,Z_n)=(x,y,z)\}\right|+1}
    \right)\\
    \le
    \frac{1}{P(Z=z)P(Y=y\mid X=x,Z=z)}.
  \end{multline}
  We can deduce \eqref{eq:Z1} from the property of validity for conformal e-prediction
  \begin{equation}\label{eq:basic}
    \E
    \frac{N+1}{\left|\{n\in[N]:Z_n=z\}\right|+1}
    \le
    \frac{1}{P(Z=z)}
  \end{equation}
  (see \eqref{eq:Y})
  and its version
  \begin{equation}\label{eq:basic-1}
    \E
    \frac{\left|\{n\in[N]:(X_n,Z_n)=(x,z)\}\right|+1}{\left|\{n\in[N]:(X_n,Y_n,Z_n)=(x,y,z)\}\right|+1}
    \le
    \frac{1}{P(Y=y\mid X=x,Z=z)}
  \end{equation}
  that holds conditionally on $X_1,\dots,X_N,Z_1,\dots,Z_N$
  (see \eqref{eq:XY}, where $X$ should be replaced by $(X,Z)$).
  Indeed, \eqref{eq:Z1} can be obtained
  by multiplying \eqref{eq:basic} and \eqref{eq:basic-1}.
\end{proof}

\begin{proof}[Proof of Lemma~\ref{lem:oblivious}]
  As in the previous proof, it suffices to prove \eqref{eq:Z1}.
  We have \eqref{eq:basic},
  and we also have \eqref{eq:basic-1} conditionally
  on $(X_n,Z_n)$, $n\in[N]$
  (which follows from the remark at the end of Sect.~\ref{subsec:conditional}).
  Combining \eqref{eq:basic} and \eqref{eq:basic-1} gives \eqref{eq:Z1}.
\end{proof}
\end{document}